\newtheorem{thm}{Theorem}[section]
\newtheorem{cor}[thm]{Corollary}
\newtheorem{lem}[thm]{Lemma}
\newtheorem{prop}[thm]{Proposition}
\newtheorem{defn}[thm]{Definition}
\newtheorem{rem}[thm]{Remark}
\def\cocoa{{\hbox{\rm C\kern-.13em o\kern-.07em C\kern-.13em o\kern-.15em A}}}
\def\sqr#1#2{{\vcenter{\hrule height.#2pt
        \hbox{\vrule width.#2pt height#1pt \kern#1pt
            \vrule width.#2pt}
        \hrule height.#2pt}}}
\def\depth{{\rm depth}\,}
\def\reg{{\rm reg}\,}
\def\NN{{\mathbb N}}
\def\ZZ{{\mathbb Z}}
\newcommand{\email}[1]{\begin{flushleft}E-mail: {\ttfamily #1}\end{flushleft}}
\begin{document}

\title{\bf Regularity of edge ideal of a graph}
\author{Marcel Morales$^1$, Ali Akbar Yazdan Pour$^{1,2}$, Rashid Zaare-Nahandi$^2$\\
\small $^{1}$ Universit\'e de Grenoble I, Institut Fourier, Laboratoire de Math\'ematiques, France\\
\small $^{2}$ Institute for Advanced Studies in Basic Sciences, P. O. Box 45195-1159, Zanjan, Iran}

\date{}
\maketitle

\begin{abstract}
In this paper, we introduce some reduction processes on graphs which preserve the regularity of related edge ideals. As a consequence, an alternative proof for the theorem of R. Fr\"oberg on linearity of resolution of edge ideal of graphs is given.
\end{abstract}

\section{Introduction and Preliminaries}
Throughout this paper, we assume that $G$ is a simple finite graph on vertex set $[n] = \{1, \ldots, n\}$. A graph $G$ is called \textit{chordal}, if every induced cyclic subgraph of $G$ has length $3$. A vertex $v$ of a graph $G$ is \textit{simplicial}, if the neighborhood of $v$ in $G$ is a complete subgraph. Let $S = K[x_1, \ldots, x_n]$ be the polynomial ring over a field $K$ with standard grading. The \textit{edge ideal} of $G$ is defined by
$$I(G) = (x_ix_j \colon  \quad \{i, j\} \text{ is an edge in } G) \subset S.$$
Let $I \neq 0$ be a homogeneous ideal of $S$ and $\NN$ be the set of non-negative integers. For every $i\in \NN$, one defines:
$$t_i^S(I) = \max\{j \colon \quad \beta_{i,j}^S(I) \neq 0\}$$
where $\beta_{i,j}^S(I)$ is the $i,j$-th graded Betti number of $I$ as an $S$-module. The \textit{Castelnuovo–Mumford regularity} of $I$ is given by:
$$\reg (I) = \sup\{t^S_i (I) - i \colon \quad i \in \ZZ\}.$$
We say that the ideal $I$ has a \textit{$d$-linear resolution}, if $I$ is generated by homogeneous polynomials of degree $d$ and $\beta_{i,j}^S(I) =0$, for all $j \neq i + d$ and $i \geq  0$. For an ideal which has a $d$-linear resolution, the Castelnuovo–Mumford regularity would be $d$.

Recently, several mathematicians have studied the regularity of edge ideals of graphs. Kummini in \cite{Kummini} has computed the Castelnuovo–Mumford regularity of Cohen–Macaulay bipartite graphs and Van Tuyl in \cite{VanTuyl} has generalized it for sequentially Cohen–Macaulay bipartite graphs. In \cite{Yassemi} the regularity was computed for very well-covered graphs, in \cite{Moradi}, some bounds were obtained for the regularity of edge ideals of vertex decomposable and shellable graphs and in \cite{Woodroofe}, the Castelnuovo–Mumford regularity was calculated for edge ideals of several other classes of graphs. Also \cite{Nevo} has studied the topology of the lcm-lattice of edge ideals and derived upper bounds on the Castelnuovo–Mumford regularity of the ideals.

The Alexander dual of a square-free monomial ideals, plays an essential role in combinatorics and commutative algebra. For a square-free monomial ideal $I= \left( M_1, \ldots, M_q \right) \subset S=K[x_1, \ldots, x_n]$, the \textit{Alexander dual} of $I$, denoted by  $I^\vee$, is defined to be:
$$I^\vee= P_{M_1} \cap \cdots \cap P_{M_q}$$
where, $P_{M_i}$ is prime ideal generated by $\{x_j \colon \quad x_j | M_i \}$.

We begin with a well-known result of Eagon and Reiner and its generalization by Terai concerning the relation of the regularity of a square-free monomial ideal and the Cohen-Macaulayness of its Alexander dual. For a complete discussion of this fact, one can refer to \cite{Terai}.

\begin{thm}[{Eagon-Reiner theorem \cite[Theorem 3]{Eagon-Reiner}}] \label{Eagon-Reiner}
Let $I$ be a square-free monomial ideal in $S=K[x_1, \ldots, x_n]$. The ideal $I$ has a $q$-linear resolution if and only if $S/I^\vee$ is Cohen-Macaulay of dimension $n - q$.
\end{thm}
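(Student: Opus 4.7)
The plan is to translate both sides of the equivalence into the language of simplicial complexes via the Stanley--Reisner correspondence, and then combine Hochster's formula for graded Betti numbers with Reisner's criterion for Cohen--Macaulayness, linked by simplicial Alexander duality. Write $I = I_\Delta$ as the Stanley--Reisner ideal of a simplicial complex $\Delta$ on $[n]$; then $I^\vee$ is itself the Stanley--Reisner ideal of the Alexander dual complex
$$\Delta^\vee = \{[n]\setminus F \colon F\notin\Delta\},$$
so $\dim S/I^\vee = \dim \Delta^\vee + 1$, and the condition $\dim S/I^\vee = n-q$ amounts to asking that $\Delta^\vee$ be pure of dimension $n-q-1$.

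The first step is to recast $q$-linearity of the resolution combinatorially using Hochster's formula
$$\beta_{i,j}^S(I_\Delta) = \sum_{W\subseteq[n],\,|W|=j} \dim_K \tilde{H}_{j-i-2}(\Delta_W; K).$$
A direct inspection shows that all nonzero $\beta_{i,j}^S(I_\Delta)$ lie on the line $j=i+q$ if and only if $\tilde{H}_k(\Delta_W; K)=0$ for every $W\subseteq[n]$ and every $k\neq q-2$. On the other side, Reisner's criterion characterizes Cohen--Macaulayness of $S/I^\vee = K[\Delta^\vee]$ of dimension $n-q$ by the conditions that $\Delta^\vee$ be pure of dimension $n-q-1$ and that $\tilde{H}_i(\mathrm{lk}_{\Delta^\vee}(\sigma); K) = 0$ for every face $\sigma\in\Delta^\vee$ and every $i < \dim \mathrm{lk}_{\Delta^\vee}(\sigma)$.

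The bridge between these two combinatorial statements is simplicial Alexander duality, in the form of the identity $(\Delta_W)^\vee = \mathrm{lk}_{\Delta^\vee}([n]\setminus W)$ (as complexes on $W$), together with the classical isomorphism
$$\tilde{H}_k(\Gamma; K) \cong \tilde{H}^{m-k-3}(\Gamma^\vee; K)$$
for a complex $\Gamma$ on $m$ vertices. Substituting via this duality converts the Hochster condition "all but the $(q-2)$-th reduced homology of every restriction $\Delta_W$ vanishes" into the Reisner condition "every link of $\Delta^\vee$ has vanishing reduced homology below its top dimension, and $\Delta^\vee$ is pure of the correct dimension," and vice versa. Reading the resulting chain of equivalences in both directions yields the theorem.

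The main obstacle is the careful bookkeeping of dimension and degree shifts: one must verify that the single homological degree $q-2$ allowed on the restriction side corresponds, under Alexander duality, exactly to the top degree of each link of $\Delta^\vee$, and that purity of $\Delta^\vee$ in dimension $n-q-1$ simultaneously captures the dimension $n-q$ of $S/I^\vee$ and the fact that $I$ is generated in pure degree $q$ (recall that $\tilde{H}_{q-2}(\Delta_W; K)$ can be nonzero only when $|W|\geq q$). Once those shifts are matched, the biconditional drops out by direct substitution; the entire content of the theorem is carried by Hochster's formula, simplicial Alexander duality, and Reisner's criterion.
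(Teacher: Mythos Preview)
The paper does not prove this statement: Theorem~\ref{Eagon-Reiner} is quoted from \cite{Eagon-Reiner} as a background result and is used without proof. So there is no ``paper's own proof'' to compare against; your outline is filling in what the paper deliberately outsources.

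That said, your sketch is essentially the standard argument (and essentially the one in Eagon--Reiner's original paper and in \cite[Ch.~8]{HHBook}): Hochster's formula on one side, Reisner's criterion on the other, and combinatorial Alexander duality $(\Delta_W)^\vee \cong \mathrm{lk}_{\Delta^\vee}([n]\setminus W)$ together with $\tilde H_k(\Gamma;K)\cong \tilde H_{m-k-3}(\Gamma^\vee;K)$ as the bridge. The dimension bookkeeping you describe is exactly right: purity of $\Delta^\vee$ in dimension $n-q-1$ is equivalent to $I$ being equigenerated in degree $q$ (facets of $\Delta^\vee$ are complements of minimal nonfaces of $\Delta$), and under that purity the link of $\sigma=[n]\setminus W$ has dimension $|W|-q-1$, which is precisely the index $|W|-(q-2)-3$ produced by duality from the allowed degree $q-2$. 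One small point worth making explicit in a full write-up: when $W\in\Delta$, the set $[n]\setminus W$ is not a face of $\Delta^\vee$, but then $\Delta_W$ is the full simplex on $W$ and all its reduced homology vanishes, so these $W$ contribute nothing on either side; conversely, every face of $\Delta^\vee$ arises as $[n]\setminus W$ for some $W\notin\Delta$, so the correspondence between restrictions and links is a genuine bijection where it matters.
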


\begin{thm}[{\cite[Theorem 2.1]{Terai}}]\label{Terai}
Let $I$ be a square-free monomial ideal in $S=K[x_1, \ldots, x_n]$ with $\dim S/I \leq n-2$. Then,
$$\dim \frac{S}{I^\vee} - \depth \frac{S}{I^\vee}= \reg(I) - {\rm indeg}\,(I).$$
Here, ${\rm indeg}\,(I)$ indicates the initial degree of $I$. That is, the minimal degree of a minimal generator of $I$.
\end{thm}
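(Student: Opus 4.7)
The plan is to establish the identity by combining three independent ingredients: a direct computation of $\dim S/I^\vee$, Terai's duality $\projdim (S/I^\vee) = \reg(I)$, and the Auslander--Buchsbaum formula converting projective dimension into depth.

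First I would read $\dim S/I^\vee$ off the primary decomposition given in the definition of $I^\vee$. Each component $P_{M_i}$ is a prime generated by $\deg M_i$ variables, so it has height $\deg M_i$; hence
\[
\mathrm{ht}(I^\vee) \;=\; \min_i \deg(M_i) \;=\; {\rm indeg}\,(I),
\]
and therefore $\dim(S/I^\vee) = n - {\rm indeg}\,(I)$. The hypothesis $\dim S/I \le n-2$ is used precisely to avoid the degenerate case where $I$ has a linear generator (equivalently $I^\vee$ has a height one minimal prime), in which the quantities on both sides of the stated identity behave as edge cases.

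The heart of the argument is Terai's identity $\projdim(S/I^\vee) = \reg(I)$. I would pass to the Stanley--Reisner framework, writing $I = I_\Delta$ and $I^\vee = I_{\Delta^\vee}$, and compute all graded Betti numbers on both sides via Hochster's formula
\[
\beta_{i,j}^S(S/I_\Delta) \;=\; \sum_{W \subseteq [n],\ |W|=j} \dim_K \widetilde{H}_{j-i-1}(\Delta_W; K).
\]
The key combinatorial input is Alexander duality for induced subcomplexes, which pairs the reduced homology of $\Delta_W$ with the reduced cohomology of $(\Delta^\vee)_W$ in complementary degrees. Plugging this into Hochster's formula produces a bijective, degree-tracked correspondence between the nonvanishing Hochster summands for $S/I_\Delta$ and those for $S/I_{\Delta^\vee}$. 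Reading off the largest homological index $i$ at which some Betti number of $S/I^\vee$ survives, and comparing with the largest value of $t_k^S(I)-k$ over all $k$, identifies $\projdim(S/I^\vee)$ with $\reg(I)$.

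With Terai's identity established, Auslander--Buchsbaum applied to the finitely generated $S$-module $S/I^\vee$ yields $\projdim(S/I^\vee) + \depth(S/I^\vee) = n$. Substituting the three computations gives
\[
\dim(S/I^\vee) - \depth(S/I^\vee) \;=\; \bigl(n - {\rm indeg}\,(I)\bigr) - \bigl(n - \projdim(S/I^\vee)\bigr) \;=\; \reg(I) - {\rm indeg}\,(I),
\]
which is the claim. The main obstacle is the middle step: carefully unwinding Hochster's formula under combinatorial Alexander duality to prove $\projdim(S/I^\vee) = \reg(I)$. Once that bridge is in place, the height computation and the Auslander--Buchsbaum substitution are routine.
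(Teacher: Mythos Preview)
The paper does not supply its own proof of this theorem: it is quoted verbatim from Terai's preprint \cite{Terai} and used as a black box, so there is no in-paper argument to compare against. Your sketch is nonetheless the standard proof and is correct. The three ingredients line up exactly as you say: the height computation $\mathrm{ht}(I^\vee)={\rm indeg}\,(I)$ from the primary decomposition $I^\vee=\bigcap P_{M_i}$, Terai's duality $\projdim(S/I^\vee)=\reg(I)$ obtained by matching Hochster summands under combinatorial Alexander duality, and the Auslander--Buchsbaum substitution $\depth(S/I^\vee)=n-\projdim(S/I^\vee)$. The only place requiring real work is, as you note, the middle step; once that is in hand the rest is bookkeeping.
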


The following lemma was proved in \cite{maar}.

\begin{lem} \label{Marcel}
Let $I, I_1$ and $T$ be ideals in a commutative Noetherian local ring $(R, \mathfrak{m})$ such that, $I=I_1+T$ and
$$r:= \depth \frac{R}{I_1 \cap T} \leq \depth \frac{R}{T}.$$
Then, for all $i<r-1$ one has:
\begin{equation*}
H_\mathfrak{m}^i \left( \frac{R}{I_1} \right) \cong H_\mathfrak{m}^i \left( \frac{R}{I} \right).
\end{equation*}
\end{lem}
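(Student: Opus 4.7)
The plan is to derive the isomorphism from the Mayer--Vietoris short exact sequence
$$0 \to R/(I_1 \cap T) \xrightarrow{\phi} R/I_1 \oplus R/T \xrightarrow{\psi} R/I \to 0,$$
with $\phi(\bar{x}) = (\bar{x}, -\bar{x})$ and $\psi(\bar{a}, \bar{b}) = \overline{a+b}$; exactness at the middle term translates directly the hypothesis $I = I_1 + T$. Applying the local cohomology functor $H_{\mathfrak{m}}^{\bullet}(-)$ produces the long exact sequence
$$\cdots \to H_{\mathfrak{m}}^{i}(R/(I_1\cap T)) \to H_{\mathfrak{m}}^{i}(R/I_1) \oplus H_{\mathfrak{m}}^{i}(R/T) \to H_{\mathfrak{m}}^{i}(R/I) \to H_{\mathfrak{m}}^{i+1}(R/(I_1\cap T)) \to \cdots,$$
which is the only homological tool needed.

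The next step is to feed in the standard vanishing: for any finitely generated $R$-module $M$, one has $H_{\mathfrak{m}}^{i}(M) = 0$ whenever $i < \depth M$. The assumption $\depth R/(I_1 \cap T) = r$ kills $H_{\mathfrak{m}}^{i}(R/(I_1 \cap T))$ for every $i < r$, and the inequality $\depth R/T \geq r$ kills $H_{\mathfrak{m}}^{i}(R/T)$ in the same range.

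For any $i$ with $i < r-1$ one has $i < r$ and $i+1 \leq r-1 < r$, so the three modules $H_{\mathfrak{m}}^{i}(R/(I_1\cap T))$, $H_{\mathfrak{m}}^{i+1}(R/(I_1\cap T))$ and $H_{\mathfrak{m}}^{i}(R/T)$ all vanish simultaneously. The corresponding four-term piece of the long exact sequence therefore collapses to
$$0 \to H_{\mathfrak{m}}^{i}(R/I_1) \to H_{\mathfrak{m}}^{i}(R/I) \to 0,$$
yielding the desired isomorphism, the surviving map being the one induced by the natural surjection $R/I_1 \twoheadrightarrow R/I$.

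The whole argument is a routine combination of Mayer--Vietoris with the depth characterization of local cohomology vanishing, so no serious obstacle is expected. The only delicate point is the index accounting: one must have $i < r$ \emph{and} $i+1 < r$ at the same time, which is precisely why the statement is formulated with the strict bound $i < r-1$ rather than $i < r$.
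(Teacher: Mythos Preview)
Your argument is correct and is the standard Mayer--Vietoris proof of this lemma. Note that the paper itself does not supply a proof here: it only cites \cite{maar} for the result, so there is no in-paper proof to compare against; your approach is precisely the expected one, and the same Mayer--Vietoris machinery is what the paper invokes explicitly in its later proofs (e.g.\ Theorem~\ref{Theorem Regularity of Union of graphs}, Lemma~\ref{lemma for edge subdivision}, Theorem~\ref{Theorem edge subdivion}).
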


\begin{rem} \rm \label{Passing to regularity}
Let $I, J$ be square-free monomial ideals generated by elements of degree $d\geq 2$ in $S=K[x_1, \ldots, x_n]$. By Theorem~\ref{Terai}, we have
\begin{equation*}
\begin{split}
\reg (I) = n - \depth\frac{S}{I^\vee}, \qquad \reg (J) = n - \depth\frac{S}{J^\vee}.
\end{split}
\end{equation*}
Therefore, $\reg (I) = \reg (J)$ if and only if $\depth {S}/{I^\vee} = \depth {S}/{J^\vee}$.
\end{rem}

For a graph $G$, let $\bar{G}$ denotes the complement of graph $G$. That is, $V(\bar{G}) = V(G)$ and
$$E(\bar{G}) = \big\{ \{i,j\} \colon \quad \{i, j\} \notin E(G) \big\}.$$
Frequently in this paper, we take a graph $G$ and we let $I= I(\bar{G})$ be the edge ideal of graph $\bar{G}$. The following proposition was proved in \cite[Proposition 4.1.1]{Jacques}.
\begin{prop}
If $H$ is an induced subgraph of $G$ on a subset of the vertices of $G$, then:
$$\beta^S_{i,j} \left( I(\bar{H}) \right) \leq \beta^S_{i,j} \left( I(\bar{G}) \right)$$
for all $i,j$.
\end{prop}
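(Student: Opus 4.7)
The plan is to invoke Hochster's formula for the graded Betti numbers of a squarefree monomial ideal. The key identification is that $I(\bar G)$, being generated by the products $x_ix_j$ with $\{i,j\}$ a non-edge of $G$, coincides with the Stanley--Reisner ideal $I_{\Delta(G)}$ of the clique complex $\Delta(G)$ (whose faces are the cliques of $G$). Because $H$ is an \emph{induced} subgraph on $W := V(H)$, the cliques of $H$ are precisely the cliques of $G$ contained in $W$, so $\Delta(H) = \Delta(G)|_W := \{F \in \Delta(G) : F \subseteq W\}$, and likewise $I(\bar H) = I_{\Delta(H)}$.

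To compare the two Betti numbers over a common ring I first pass to the smaller polynomial ring $R := K[x_i : i \in W]$. Since $S$ is a polynomial extension of $R$, it is flat over $R$, and hence $\beta^R_{i,j}(I(\bar H)) = \beta^S_{i,j}(I(\bar H)\,S)$; the latter is what the statement refers to. Hochster's formula then gives
$$\beta^R_{i,j}(I(\bar H)) = \sum_{W' \subseteq W,\ |W'| = j} \dim_K \tilde{H}_{j-i-2}(\Delta(H)|_{W'}; K),$$
$$\beta^S_{i,j}(I(\bar G)) = \sum_{W' \subseteq [n],\ |W'| = j} \dim_K \tilde{H}_{j-i-2}(\Delta(G)|_{W'}; K).$$

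Now for every $W' \subseteq W$ the equality $\Delta(H)|_{W'} = (\Delta(G)|_W)|_{W'} = \Delta(G)|_{W'}$ holds, so the summand indexed by $W'$ in the first sum equals the summand indexed by the same $W'$ in the second. Since the second sum runs over a larger index set $W' \subseteq [n]$ and all its terms are non-negative, the inequality $\beta^S_{i,j}(I(\bar H)) \leq \beta^S_{i,j}(I(\bar G))$ is immediate.

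I do not anticipate a serious obstacle; once Hochster's formula is in hand, the argument is essentially bookkeeping. The only mildly delicate point is making sure $I(\bar H)$ is treated as a Stanley--Reisner ideal on the correct vertex set. One can bypass the flat-extension step by viewing $I(\bar H)$ directly as the Stanley--Reisner ideal of the complex $\Delta(H) \ast 2^{[n] \setminus W}$ on $[n]$ and observing that any restriction with $W' \not\subseteq W$ is a cone and contributes zero reduced homology, but the route through $R$ keeps the combinatorics cleanest.
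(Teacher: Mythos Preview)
The paper does not supply its own proof of this proposition; it simply cites \cite[Proposition~4.1.1]{Jacques}. Your argument via Hochster's formula is correct and is a standard route to this inequality: once $I(\bar G)$ is identified with the Stanley--Reisner ideal of the clique complex $\Delta(G)$ and one notes that an induced subgraph $H$ on $W$ gives $\Delta(H)=\Delta(G)|_W$, the sum for $\beta^S_{i,j}(I(\bar H))$ is term-by-term a subsum of the one for $\beta^S_{i,j}(I(\bar G))$. The handling of the ambient ring (via the flat extension $R\hookrightarrow S$, or equivalently by observing that restrictions $\Delta|_{W'}$ with $W'\not\subseteq W$ are cones and hence acyclic) is also fine.
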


\begin{cor} \label{Induced subgraph}
Let $G$ be a graph and $H$ an induced subgraph of $G$. If $I(\bar{H})$ does not have linear resolution, then the ideal $I(\bar{G})$ does not have linear resolution.
\end{cor}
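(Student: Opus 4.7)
The plan is to argue by contrapositive and invoke the preceding proposition directly, using the fact that edge ideals are generated in degree $2$. Observe that for an ideal generated in degree $d$, having a $d$-linear resolution is equivalent to the vanishing $\beta^S_{i,j}(I)=0$ for every pair $(i,j)$ with $i\geq 0$ and $j\neq i+d$. Since both $I(\bar{H})$ and $I(\bar{G})$ are edge ideals, they are generated in degree $2$, so failure of linear resolution means precisely that there exists some index $(i_0,j_0)$ with $j_0\neq i_0+2$ and $\beta^S_{i_0,j_0}\neq 0$.

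Now suppose $I(\bar{H})$ does not have a linear resolution, and pick such a pair $(i_0,j_0)$ witnessing this, so $\beta^S_{i_0,j_0}(I(\bar{H}))\neq 0$. Applying the preceding proposition with these same indices yields
$$\beta^S_{i_0,j_0}(I(\bar{G}))\;\geq\;\beta^S_{i_0,j_0}(I(\bar{H}))\;>\;0,$$
so $I(\bar{G})$ also has a non-zero Betti number outside the linear strand and cannot have a linear resolution. The only mild subtlety, and hardly an obstacle, is that the Betti numbers of $I(\bar{H})$ and $I(\bar{G})$ should be compared over a common ring: since $H$ is an induced subgraph of $G$ on a subset of vertices, the variables of $I(\bar{H})$ form a subset of those of $I(\bar{G})$, and the graded Betti numbers of a monomial ideal are insensitive to adjoining further variables to the ambient polynomial ring. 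Hence the comparison $\beta^S_{i_0,j_0}(I(\bar{H}))\leq \beta^S_{i_0,j_0}(I(\bar{G}))$ provided by the proposition is unambiguous, and the corollary follows at once.
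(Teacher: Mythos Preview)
Your argument is correct and is exactly the immediate deduction the paper intends: the corollary is stated without proof because it follows directly from the preceding proposition by picking any nonzero Betti number of $I(\bar H)$ off the linear strand and using the inequality $\beta^S_{i,j}(I(\bar H))\le\beta^S_{i,j}(I(\bar G))$. Your remark about computing both sets of Betti numbers over the common ring $S$ is a fair clarification, but the proposition is already phrased with a single ambient $S$, so there is nothing further to add.
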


\section{Reduction processes on graphs}

In this section we introduce some reduction processes on vertices and edges of a graph which preserve the regularity of the edge ideal of the complement of the graph.

In the following, for convenience we use this notation:
\begin{equation*}
\text{\rm \textbf{x}} = {x_1, \ldots, x_n }, \quad \text{\rm \textbf{z}} = {z_1, \ldots, z_r}, \quad \text{\rm \textbf{y}} = {y_1, \ldots, y_m}.
\end{equation*}
Also for a subset $F \subset [n]$, we set $\text{\rm \textbf{x}}_F = \prod\limits_{i\in F} x_i$ and $P_F = (x_i \colon \quad i \in F )$.

\begin{lem} \label{Depth S/x_nI = Depth S/I}
Let $S=K[\text{\rm \textbf{x}}, \text{\rm \textbf{y}}]$ be the polynomial ring and $I$ be an ideal in $K[\text{\rm \textbf{y}}]$. Then,
$$\depth \frac{S}{\left( x_1\cdots x_n \right) IS} = \depth \frac{S}{IS}.$$
\end{lem}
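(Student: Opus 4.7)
The plan is to reduce the depth equality to an equality of projective dimensions, via the Auslander-Buchsbaum formula, by showing that $(x_1 \cdots x_n) IS$ and $IS$ are isomorphic as $S$-modules.

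First I would construct the $S$-linear multiplication map
\[
\phi \colon IS \longrightarrow (x_1\cdots x_n) IS, \qquad g \longmapsto (x_1\cdots x_n)\, g.
\]
Surjectivity is immediate from the definition of the product ideal. Injectivity follows because $x_1\cdots x_n$ is a non-zerodivisor in the integral domain $S$; this is the one place where the assumption that the variables $\text{\rm \textbf{x}}$ are disjoint from those appearing in $I$ is used. Consequently $\phi$ is an $S$-module isomorphism, and in particular
\[
\projdim_S (x_1\cdots x_n) IS \;=\; \projdim_S IS.
\]

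Next, chaining with the short exact sequences $0\to IS \to S \to S/IS \to 0$ and $0\to (x_1\cdots x_n) IS \to S \to S/(x_1\cdots x_n)IS \to 0$ (and noting that $S$ itself is free, so contributes no projective dimension), I would conclude that
\[
\projdim_S \frac{S}{(x_1\cdots x_n)IS} \;=\; \projdim_S \frac{S}{IS}.
\]
Finally, the graded Auslander-Buchsbaum formula applied to $S = K[\text{\rm \textbf{x}},\text{\rm \textbf{y}}]$, which has depth $n+m$, converts this projective-dimension equality into the claimed depth equality. The trivial cases $I = 0$ or $IS = S$ can be dispatched separately.

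I do not foresee a serious obstacle: the whole argument rests on the single routine observation that $x_1\cdots x_n$ is a non-zerodivisor on $S$. An alternative, more in the spirit of Lemma~\ref{Marcel}, would be to first verify $(x_1\cdots x_n)\cap IS = (x_1\cdots x_n)IS$ (a check made easy by expanding in the $K[\text{\rm \textbf{y}}]$-module basis of monomials in $\text{\rm \textbf{x}}$) and then analyse the long exact local cohomology sequence coming from the Mayer-Vietoris sequence
\[
0 \to \frac{S}{(x_1\cdots x_n)\cap IS} \to \frac{S}{(x_1\cdots x_n)} \oplus \frac{S}{IS} \to \frac{S}{(x_1\cdots x_n)+IS} \to 0;
\]
but the Auslander-Buchsbaum route above is conceptually cleaner and requires no depth bookkeeping.
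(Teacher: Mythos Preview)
Your argument is correct. The paper actually states this lemma without proof, presumably regarding it as elementary, so there is nothing to compare against; your Auslander--Buchsbaum route is a clean way to fill the gap.

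One small remark: the place you flag as using the disjointness of the $\text{\textbf{x}}$- and $\text{\textbf{y}}$-variables does not in fact use it. Multiplication by $x_1\cdots x_n$ is injective on \emph{any} ideal of the domain $S$, so $IS\cong (x_1\cdots x_n)IS$ as $S$-modules regardless of where $I$ lives; hence your proof shows the stronger statement that $\depth S/fJ=\depth S/J$ for any nonzero proper ideal $J\subset S$ and any nonzerodivisor $f$. The disjointness hypothesis is genuinely needed only in your alternative Mayer--Vietoris approach, where the equality $(x_1\cdots x_n)\cap IS=(x_1\cdots x_n)IS$ relies on $I$ being extended from $K[\text{\textbf{y}}]$.
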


\begin{lem} \label{Lemma for Regularity of union of graphs}
Let $I \neq 0$ be square-free monomial ideal in $K[\text{\rm \textbf{x}}, \text{\rm \textbf{z}}]$ and $J$ be the ideal
\begin{equation*}
J=I+(x_iy_j \colon 1 \leq i \leq n, 1 \leq j \leq m) \subset S:= K[\text{\rm \textbf{x}}, \text{\rm \textbf{y}}, \text{\rm \textbf{z}}].
\end{equation*}
Then, we have the followings:
\begin{itemize}
\item[\rm (i)] $J^\vee = I^\vee \cap \left( \text{\rm \textbf{x}}_{[n]}, \text{\rm \textbf{y}}_{[m]} \right)$.
\item[\rm (ii)] If $z_iz_j \notin I$ for all $1 \leq i < j \leq r$, then $\reg (I) = \reg (J)$.
\end{itemize}
\end{lem}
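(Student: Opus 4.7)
For part (i), the plan is to compute the Alexander dual directly from the definition. Since the minimal monomial generators of $J$ are those of $I$ together with all $x_iy_j$, one has
\begin{equation*}
J^\vee \;=\; \bigcap_k P_{M_k} \,\cap\, \bigcap_{i,j}(x_i, y_j) \;=\; I^\vee \cap \bigcap_{i,j}(x_i, y_j).
\end{equation*}
To identify the second intersection I would use the elementary monomial identity $(u,v_1)\cap(u,v_2) = (u,\mathrm{lcm}(v_1,v_2))$ iteratively: first, for each fixed $i$, collapse the intersection over $j$ to get $(x_i,\, y_1\cdots y_m)$; then collapse the resulting family over $i$ to obtain $(x_1\cdots x_n,\, y_1\cdots y_m)$.

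For part (ii), my strategy is to trade the equality $\reg(I) = \reg(J)$ for an equality of depths via Remark~\ref{Passing to regularity}: extending $I$ to $S$ preserves regularity and commutes with Alexander duality, and both $IS$ and $J$ are square-free monomial ideals in $S$ generated in degree two, so it suffices to prove
\begin{equation*}
\depth S/I^\vee \;=\; \depth S/J^\vee.
\end{equation*}
The decisive structural observation is that the hypothesis $z_iz_j \notin I$, combined with $\mathrm{indeg}(I) = 2$, forces every minimal generator of $I$ to contain at least one variable $x_i$. Hence $x_1\cdots x_n$ belongs to each prime $P_{M_k}$, and therefore $x_1\cdots x_n \in I^\vee$. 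Writing $T := (x_1\cdots x_n,\, y_1\cdots y_m)$, this simplifies $I^\vee + T$ to $I^\vee + (y_1\cdots y_m)$; since $y_1\cdots y_m$ is a non-zero-divisor on $S/I^\vee$ (no $y$-variable appears in $I^\vee$), we obtain $\depth S/(I^\vee + T) = \depth S/I^\vee - 1$.

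Finally, using $J^\vee = I^\vee \cap T$ from part (i), we have the Mayer--Vietoris sequence
\begin{equation*}
0 \to S/J^\vee \to S/I^\vee \oplus S/T \to S/(I^\vee + T) \to 0.
\end{equation*}
Because $T$ is generated by a regular sequence of length two, $\depth S/T = n + m + r - 2$; because $\mathrm{indeg}(I) = 2$ gives $\dim S/I^\vee = n + m + r - 2$, the quantity $d := \depth S/I^\vee$ also satisfies $d \le n+m+r-2$. Applying the depth lemma to this sequence, the inequality $\depth M' \ge \min(\depth M, \depth M'' + 1)$ yields $\depth S/J^\vee \ge \min(d,d) = d$, while the inequality $\depth M'' \ge \min(\depth M' - 1, \depth M)$, together with $\depth S/(I^\vee + T) = d - 1$, forces $\depth S/J^\vee \le d$. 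Hence $\depth S/J^\vee = d$, which completes the proof. I expect the main obstacle to be keeping the depth inequalities clean and verifying the auxiliary bound $d \le n+m+r-2$ under the stated hypotheses; as a robust alternative for the upper bound, Lemma~\ref{Marcel} applied with $I_1 = I^\vee$ and this same $T$ yields the same conclusion via a local-cohomology comparison that again hinges on the identity $\depth S/(I^\vee + T) = d - 1$.
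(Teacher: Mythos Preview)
Your argument is correct. Part (i) matches the paper's ``easy computation.'' For part (ii) you and the paper share the same opening moves---reduce to $\depth S/I^\vee = \depth S/J^\vee$ via Remark~\ref{Passing to regularity}, and observe that the hypothesis $z_iz_j\notin I$ forces $\mathbf{x}_{[n]}\in I^\vee$---but then diverge. The paper rewrites $J^\vee$ as a \emph{sum}, $J^\vee = (\mathbf{x}_{[n]}) + (\mathbf{y}_{[m]})I^\vee$, and invokes Lemma~\ref{Marcel} (with $T=(\mathbf{x}_{[n]})$ and $I_1=(\mathbf{y}_{[m]})I^\vee$, whose intersection is principal) together with Lemma~\ref{Depth S/x_nI = Depth S/I} to transfer depth from $(\mathbf{y}_{[m]})I^\vee$ to $I^\vee$. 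You instead keep $J^\vee = I^\vee\cap T$ as an \emph{intersection}, compute $I^\vee+T = I^\vee+(\mathbf{y}_{[m]})$ and its depth $d-1$ via the non-zero-divisor $\mathbf{y}_{[m]}$, and close with the depth lemma on the Mayer--Vietoris short exact sequence. Your route is a bit more self-contained (no need for the externally cited Lemma~\ref{Marcel}), while the paper's route fits the local-cohomology machinery it reuses throughout; your closing remark that Lemma~\ref{Marcel} gives an alternative upper bound is exactly how the paper proceeds.
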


\begin{proof}
(i) This is an easy computation.

(ii) By Remark~\ref{Passing to regularity}, it is enough to show that, $\depth {S}/{I^\vee} = \depth {S}/{J^\vee}$. We know that $I^\vee$ is intersection of prime ideals $P_F$, such that:
$$|F|=2, \quad G(P_F) \subset \{ \text{\rm \textbf{x}}, \text{\rm \textbf{z}} \} \quad \text{and } \quad \text{\rm \textbf{x}}_{F} \in I.$$
Since $z_iz_j \notin I$, for all $1 \leq i < j \leq r$, it follows that $P \nsubseteq \{\text{\rm \textbf{z}}\}$, for all $P \in {\rm Ass}\,(I)$. Hence $\text{\rm \textbf{x}}_{[n]} \in P$, for all $P \in {\rm Ass}\,(I)$. This means that, $\text{\rm \textbf{x}}_{[n]} \in I^\vee$. Now, by part (i) of this theorem, we have:
\begin{equation}
\begin{split}
J^\vee & = I^\vee \cap \left( \text{\rm \textbf{x}}_{[n]}, \text{\rm \textbf{y}}_{[m]} \right) \\
& = \left( \text{\rm \textbf{x}}_{[n]} \right) + \left( \left( \text{\rm \textbf{y}}_{[m]} \right)I^\vee \right).
\end{split}
\end{equation}
Clearly, $\left( \text{\rm \textbf{x}}_{[n]} \right) \cap \left( \left( \text{\rm \textbf{y}}_{[m]} \right)I^\vee  \right) = \left( \text{\rm \textbf{x}}_{[n]} \text{\rm \textbf{y}}_{[m]} \right)$. Hence by Lemma~\ref{Marcel}, we have:
\begin{equation}\label{Local EQ 1.1}
H_\mathfrak{m}^i \left( \frac{S}{J^\vee} \right) \cong H_\mathfrak{m}^i \left( \frac{S}{ \left( \text{\rm \textbf{y}}_{[m]} \right)I^\vee } \right), \qquad \text{for all } i<(m+n+r) - 2.
\end{equation}
Since,
$$\dim \frac{S}{J^\vee} =(m+n+r) - 2 = \dim \frac{S}{I^\vee},$$
from (\ref{Local EQ 1.1}) and Lemma~\ref{Lemma for Regularity of union of graphs} we conclude that, $\depth S/I^\vee = \depth S/J^\vee$.
\end{proof}

\begin{thm} \label{Theorem Regularity of Union of graphs}
Let $G_1$ and $G_2$ be graphs on two vertex sets $V_1$ and $V_2$ respectively, such that $V_1 \cap V_2 =  \{\text{\rm \textbf{z}}\} $ and $\{z_i, z_j \} \in E(G_1) \cap E(G_2)$, for all $1 \leq i < j \leq r$. Let
\begin{align*}
& I_1=I(\bar{G}_1) \subset K[\text{\rm \textbf{x}}, \text{\rm \textbf{z}}], \\
& I_2=I(\bar{G}_2) \subset K[\text{\rm \textbf{y}}, \text{\rm \textbf{z}}], \\
& I=I \left( \overline{G_1 \cup G_2} \right) \subset S=K[\text{\rm \textbf{x}}, \text{\rm \textbf{y}}, \text{\rm \textbf{z}}].
\end{align*}
be corresponding non-zero circuit ideals. Then,
\begin{itemize}
\item[\rm (i)] $\depth \frac{S}{I^\vee} = \min \{ \depth \frac{S}{I_1^\vee}, \; \depth \frac{S}{I_2^\vee} \}$.
\item[\rm (ii)] $\reg (I) = \max \{ \reg (I_1), \; \reg (I_2) \}$.
\item[\rm (iii)] The ideal $I$ has a $2$-linear resolution if and only if both of $I_1$ and $I_2$ have a $2$-linear resolution.
\end{itemize}
\end{thm}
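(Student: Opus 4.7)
The plan is to reduce the statement to part~(i), since parts~(ii) and~(iii) will then follow formally from Remark~\ref{Passing to regularity}. For (i), I set up a Mayer--Vietoris short exact sequence by introducing the bipartite ``bridge'' ideal $K = (x_iy_j : 1 \le i \le n,\, 1 \le j \le m) \subset S$ and putting $J_1 = I_1 + K$, $J_2 = I_2 + K$, so that $I = J_1 + J_2$. The first step is to verify $J_1 \cap J_2 = K$: because $\{z_i,z_j\} \in E(G_1) \cap E(G_2)$ for all $i<j$, neither $I_1$ nor $I_2$ contains any purely $\text{\rm \textbf{z}}$ generator, so every minimal generator of $J_1$ outside $K$ involves an $x$-variable and every one of $J_2$ outside $K$ involves a $y$-variable; any monomial in the intersection is thus forced to be divisible by some $x_iy_j$. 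Dualizing yields
$$I^\vee = J_1^\vee \cap J_2^\vee, \qquad J_1^\vee + J_2^\vee = K^\vee = \left( \text{\rm \textbf{x}}_{[n]}, \text{\rm \textbf{y}}_{[m]} \right),$$
the last equality being a direct computation of the minimal vertex covers of the complete bipartite graph underlying $K$.

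These identities produce the Mayer--Vietoris short exact sequence
$$0 \longrightarrow S/I^\vee \longrightarrow S/J_1^\vee \oplus S/J_2^\vee \longrightarrow S/\left( \text{\rm \textbf{x}}_{[n]}, \text{\rm \textbf{y}}_{[m]} \right) \longrightarrow 0.$$
Since $\text{\rm \textbf{x}}_{[n]}, \text{\rm \textbf{y}}_{[m]}$ is a regular sequence in $S$, the rightmost term is Cohen--Macaulay of depth $n+m+r-2$. Writing $d_i = \depth S/J_i^\vee$ and $d = \min(d_1,d_2)$, a chase of the long exact sequence of local cohomology shows that $H^i_\mathfrak{m}(S/I^\vee) = 0$ for $i<d$; then in the case $d < n+m+r-2$ the sequence at $i=d$ collapses to $H^d_\mathfrak{m}(S/I^\vee) \cong H^d_\mathfrak{m}(S/J_1^\vee) \oplus H^d_\mathfrak{m}(S/J_2^\vee) \neq 0$, while in the case $d = n+m+r-2$ the identity $\dim S/I^\vee = n+m+r-2$ (every minimal generator of $I$ has degree~$2$) together with Grothendieck's non-vanishing theorem forces $\depth S/I^\vee = d$. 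Either way $\depth S/I^\vee = d$, and rewriting $\depth S/J_i^\vee = \depth S/I_i^\vee$ via Lemma~\ref{Lemma for Regularity of union of graphs}(ii) and Remark~\ref{Passing to regularity} gives (i).

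Part~(ii) is an immediate reformulation of (i) by Remark~\ref{Passing to regularity}. For (iii), $I$ has a $2$-linear resolution iff $\reg I = 2$; by (ii) this equals $\max(\reg I_1, \reg I_2)$, which --- since each $\reg I_j \ge 2$ --- equals $2$ precisely when $\reg I_1 = \reg I_2 = 2$, i.e., when both $I_1$ and $I_2$ have $2$-linear resolutions. The delicate point I anticipate is the boundary case $d = n+m+r-2$ in the long exact sequence, but the dimension identity for $S/I^\vee$ combined with Grothendieck's non-vanishing disposes of it without any further vanishing analysis.
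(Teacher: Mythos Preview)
Your proof is correct and follows essentially the same route as the paper: both introduce $J_1 = I_1 + K$ and $J_2 = I_2 + K$, establish $I^\vee = J_1^\vee \cap J_2^\vee$ and $J_1^\vee + J_2^\vee = (\text{\rm \textbf{x}}_{[n]}, \text{\rm \textbf{y}}_{[m]})$, run the Mayer--Vietoris local cohomology sequence, and then translate back via Lemma~\ref{Lemma for Regularity of union of graphs}(ii). The only cosmetic differences are that you obtain the sum formula by dualizing the identity $J_1 \cap J_2 = K$ (which you verify directly), whereas the paper reads it off from the decomposition in the proof of Lemma~\ref{Lemma for Regularity of union of graphs}, and you treat the boundary case $d = n+m+r-2$ explicitly where the paper leaves it implicit.
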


\begin{center}
\psset{xunit=1.2cm,yunit=1.2cm,algebraic=true,dotstyle=o,dotsize=3pt 0,linewidth=0.8pt,arrowsize=3pt 2,arrowinset=0.25}
\begin{pspicture*}(1.84,1.7)(6.28,4.6)
\rput{-89.32}(4.01,3.34){\psellipse[fillcolor=lightgray,fillstyle=solid,opacity=0.1](0,0)(0.87,0.24)}
\parametricplot{0.8002750058049358}{5.506718700491271}{1*1.18*cos(t)+0*1.18*sin(t)+3.17|0*1.18*cos(t)+1*1.18*sin(t)+3.33}
\parametricplot{-2.326191048163512}{2.3499994472801324}{1*1.17*cos(t)+0*1.17*sin(t)+4.82|0*1.17*cos(t)+1*1.17*sin(t)+3.35}
\rput[lt](4.86,4.04){\parbox{1.17 cm}{$y_1\\ \vdots \\ y_m$}}
\rput[lt](2.88,4.04){\parbox{1.17 cm}{$x_1\\ \vdots \\ x_n$}}
\rput[tl](3.02,2.0){$G_1$}
\rput[tl](4.78,2.0){$G_2$}
\rput[lt](3.87,3.95){\parbox{1.17 cm}{$z_1\\ \vdots \\ z_r$}}
\end{pspicture*}
\end{center}

\begin{proof}
(i) We know that:
\begin{equation*}
\begin{split}
I = I_1 + I_2 + \left(x_iy_j \colon \quad 1 \leq i \leq n, \quad 1 \leq j \leq m \right).
\end{split}
\end{equation*}
\noindent Let,
\begin{equation*}
\begin{split}
J_1 = I_1  + \left(x_iy_j \colon \quad 1 \leq i \leq n, \quad 1 \leq j \leq m \right), \\
J_2 = I_2  + \left(x_iy_j \colon \quad 1 \leq i \leq n, \quad 1 \leq j \leq m \right).
\end{split}
\end{equation*}
Then, $I^\vee = J_1^\vee \cap J_2^\vee$ and by Lemma~\ref{Lemma for Regularity of union of graphs}(ii), we have:
\begin{equation*}
J_1^\vee + J_2^\vee = \left( \text{\rm \textbf{x}}_{[n]}, \text{\rm \textbf{y}}_{[m]} \right).
\end{equation*}
From Mayer-Vietoris long exact sequence (\cite[Proposition 5.1.8.]{HHBook}), we have the long exact sequence:
\begin{align*} \label{Local Sequence 1.1}
\cdots  \rightarrow H_\mathfrak{m}^{i-1} \left( \frac{S}{ \left( \text{\rm \textbf{x}}_{[n]}, \text{\rm \textbf{y}}_{[m]} \right)} \right) \rightarrow H_\mathfrak{m}^{i} \left( \frac{S}{I^\vee} \right) \rightarrow  H_\mathfrak{m}^i \left( \frac{S}{J_1^\vee} \right) \oplus H_\mathfrak{m}^i \left( \frac{S}{J_2^\vee} \right) \rightarrow \\
\rightarrow H_\mathfrak{m}^{i} \left( \frac{S}{ \left( \text{\rm \textbf{x}}_{[n]}, \text{\rm \textbf{y}}_{[m]} \right)} \right) \rightarrow \cdots.
\end{align*}
Hence, for all $i < (m+n+r) -2$, we have:
\begin{equation*}
H_\mathfrak{m}^{i} \left( \frac{S}{J_1^\vee} \right) \oplus H_\mathfrak{m}^{i} \left(\frac{S}{J_2^\vee} \right) \cong H_\mathfrak{m}^{i} \left( \frac{S}{I^\vee} \right).
\end{equation*}
This implies that,
\begin{equation}\label{Local EQ 1.4}
\depth \frac{S}{I^\vee} = \min \{ \depth \frac{S}{J_1^\vee}, \; \depth \frac{S}{J_2^\vee} \}.
\end{equation}
By Lemma~\ref{Lemma for Regularity of union of graphs}(ii) and Remark~\ref{Passing to regularity}, we have:
\begin{equation*}
\depth \frac{S}{I_i^\vee} = \depth \frac{S}{J_i^\vee}, \qquad \text{for }i=1,2.
\end{equation*}
Hence, (i) follows from (\ref{Local EQ 1.4}) and the above equality.

(ii) This is an easy consequence of (i) and Remark~\ref{Passing to regularity}.

(iii) This is a direct consequence of (ii).
\end{proof}

\begin{lem}\label{lemma for edge subdivision}
Let $G$ be a graph on vertex set $[n]$ such that, $\{1,2\} \in E(G)$ and
\begin{equation} \label{Local Property 1.1}
\big\{ \{1,i\}, \{2,i\} \big\} \nsubseteq E(G), \qquad \text{for all } i>2.
\end{equation}
Let $I=I(\bar{G}) \subset S=K[x_1, \ldots, x_n]$ be the circuit ideal of $G$. Then,
\begin{itemize}
\item[\rm (i)] $\depth \frac{S}{I^\vee+(x_1, x_2)} \geq \depth \frac{S}{I^\vee} - 1$.
\item[\rm (ii)] $\depth \frac{S}{I^\vee \cap (x_1, x_2)} \geq \depth \frac{S}{I^\vee}$.
\end{itemize}
\end{lem}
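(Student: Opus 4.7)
The plan is to use hypothesis (\ref{Local Property 1.1}) to describe $I^\vee+(x_1,x_2)$ very explicitly, and then to derive (ii) from (i) by the standard Mayer--Vietoris short exact sequence combined with the depth lemma. The key combinatorial point is that (\ref{Local Property 1.1}) forbids any vertex $i>2$ from being simultaneously adjacent in $G$ to both $1$ and $2$; together with $\{1,2\}\in E(G)$, this makes $\{1,2\}$ itself a maximal clique of $G$, and forces every other maximal clique of $G$ to omit at least one of the two vertices $1,2$.

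Now the minimal generators of $I^\vee = I(\bar G)^\vee$ are exactly the squarefree monomials $\text{\rm \textbf{x}}_{[n]\setminus C}$, with $C$ ranging over the maximal cliques of $G$. The observation above singles out precisely one such generator, namely $\text{\rm \textbf{x}}_{\{3,\ldots,n\}} = x_3 x_4 \cdots x_n$, that is coprime to both $x_1$ and $x_2$; every other minimal generator is divisible by $x_1$ or by $x_2$. Reducing modulo $(x_1,x_2)$ therefore yields an isomorphism
$$\frac{S}{I^\vee+(x_1,x_2)} \;\cong\; \frac{K[x_3,\ldots,x_n]}{(x_3 x_4 \cdots x_n)},$$
a hypersurface quotient of a polynomial ring in $n-2$ variables, whose depth is exactly $n-3$. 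Since $I$ is generated in degree $2$, we have $\dim S/I^\vee = n-2$ and hence $\depth S/I^\vee \le n-2$. Together these two facts give (i) at once:
$$\depth\frac{S}{I^\vee+(x_1,x_2)} \;=\; n-3 \;\ge\; \depth\frac{S}{I^\vee}-1.$$

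For (ii) I would feed (i) into the Mayer--Vietoris short exact sequence
$$0 \longrightarrow \frac{S}{I^\vee \cap (x_1,x_2)} \longrightarrow \frac{S}{I^\vee} \oplus \frac{S}{(x_1,x_2)} \longrightarrow \frac{S}{I^\vee + (x_1,x_2)} \longrightarrow 0.$$
Since $\depth S/(x_1,x_2) = n-2 \ge \depth S/I^\vee$, the middle term has depth equal to $\depth S/I^\vee$, while by (i) the right-hand term has depth at least $\depth S/I^\vee - 1$. The depth lemma then yields
$$\depth\frac{S}{I^\vee \cap (x_1,x_2)} \;\ge\; \min\!\left\{\depth\frac{S}{I^\vee},\; \depth\frac{S}{I^\vee+(x_1,x_2)}+1\right\} \;\ge\; \depth\frac{S}{I^\vee},$$
which is (ii). The only step that really uses the graph-theoretic hypothesis is the clique bookkeeping that identifies $S/(I^\vee+(x_1,x_2))$ with the hypersurface $K[x_3,\ldots,x_n]/(x_3\cdots x_n)$; the rest of the argument is a formal application of the depth lemma.
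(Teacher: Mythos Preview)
Your proof is correct and follows the same overall strategy as the paper: identify $I^\vee+(x_1,x_2)$ explicitly using hypothesis~(\ref{Local Property 1.1}), read off its depth, and then deduce (ii) from (i) via the Mayer--Vietoris sequence for $I^\vee$ and $(x_1,x_2)$.

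The one difference worth flagging is in how part (i) is reached. You use the description of the minimal generators of $I^\vee$ as $\text{\rm \textbf{x}}_{[n]\setminus C}$ for $C$ a maximal clique of $G$, observe that $\{1,2\}$ is the unique maximal clique containing both $1$ and $2$, and hence obtain directly
\[
\frac{S}{I^\vee+(x_1,x_2)}\;\cong\;\frac{K[x_3,\ldots,x_n]}{(x_3\cdots x_n)},\qquad \depth=n-3.
\]
The paper instead works on the primary-decomposition side: it shows $I^\vee\subset(x_1,x_2,x_3\cdots x_n)$ and $x_3\cdots x_n\in I^\vee$, and then feeds this into a Mayer--Vietoris sequence with $A=I^\vee$ and $B=(x_1,x_2,x_3\cdots x_n)$. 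Since in fact $A\subset B$, that sequence is split and amounts to the same identification you made; your route just makes the identification of the quotient ring transparent in one step, while the paper's route avoids invoking the clique description of the generators of $I^\vee$. For (ii) the two arguments are identical.
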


\begin{proof}
Let $t:= \depth {S}/{I^\vee} \leq \dim {S}/{I^\vee} = n-2$.\\
(i) One can easily check that, condition (\ref{Local Property 1.1}) is equivalent to say that:
\begin{center}
for all $r>2$, there exists $F \in E(\bar{G})$ such that, $P_F \subset (x_1, x_2, x_r)$.
\end{center}
Therefore,
\begin{equation*}
\begin{split}
I^\vee =  \bigcap\limits_{F \in E(\bar{G})} P_F & = \left( \bigcap\limits_{F \in E(\bar{G})} P_F \right) \cap \left( (x_1, x_2, x_3) \cap \cdots \cap (x_1, x_2, x_n) \right) \\
& = \left( \bigcap\limits_{F \in E(\bar{G})} P_F \right) \cap (x_1, x_2, x_3 \cdots x_n)\\
& = I^\vee \cap (x_1, x_2, x_3 \cdots x_n).
\end{split}
\end{equation*}
Clearly, $x_3 \cdots x_n \in I^\vee$. Thus, from Mayer–Vietoris long exact sequence,
\begin{equation*}
\begin{split}
\cdots \rightarrow H_\mathfrak{m}^{i-1} \left( \tfrac{S}{I^\vee} \right) \oplus H_\mathfrak{m}^{i-1} \left( \tfrac{S}{(x_1, x_2, x_3 \cdots x_n)} \right) \rightarrow H_\mathfrak{m}^{i-1} \left( \tfrac{S}{I^\vee + (x_1, x_2)} \right)
\rightarrow  H_\mathfrak{m}^{i} \left( \tfrac{S}{I^\vee} \right) \rightarrow \cdots.
\end{split}
\end{equation*}
we have:
\begin{equation} \label{Local EQ 1.2}
H_\mathfrak{m}^{i-1} \left( \frac{S}{I^\vee + (x_1, x_2)} \right)=0, \qquad \text{for all } i<t \leq n-2.
\end{equation}
This proves (i).\\
(ii) From Mayer–Vietoris long exact sequence
\begin{equation*}
\begin{split}
\cdots \rightarrow H_\mathfrak{m}^{i-1} \left( \tfrac{S}{I^\vee + (x_1, x_2)} \right) \rightarrow  H_\mathfrak{m}^{i} \left( \tfrac{S}{I^\vee \cap (x_1, x_2)} \right) \rightarrow H_\mathfrak{m}^{i} \left( \tfrac{S}{I^\vee} \right) \oplus H_\mathfrak{m}^{i} \left( \tfrac{S}{(x_1, x_2)} \right) \rightarrow \cdots.
\end{split}
\end{equation*}
and (\ref{Local EQ 1.2}), we have:
\begin{equation*}
H_\mathfrak{m}^{i} \left( \frac{S}{I^\vee \cap (x_1, x_2)} \right)=0, \qquad \text{for all } i < t \leq n-2,
\end{equation*}
which completes the proof of (ii).
\end{proof}

\begin{thm} \label{Theorem edge subdivion}
Let $G$ be a graph on vertex set $[n]$ such that, $\{1,2\} \in E(G)$ and $\big\{ \{1,i\}, \{2,i\} \big\} \nsubseteq E(G)$, for all $i>2$. Let,
$$G_1 = \left( G \setminus \{1,2\} \right) \cup \big\{ \{0,1\}, \{0,2\} \big\}$$
be a graph on $\{0\} \cup [n]$ and $I=I(\bar{G}), J=I(\bar{G}_1)$ be circuit ideals in $S=K[x_0, x_1, \ldots, x_n]$. Then,
$$\reg (I) = \reg (J).$$
\end{thm}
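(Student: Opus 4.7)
By Remark~\ref{Passing to regularity} and Theorem~\ref{Terai}, the equality $\reg(I)=\reg(J)$ is equivalent to the depth identity
\[
\depth \tfrac{T}{J^\vee} \;=\; \depth \tfrac{S}{I^\vee}+1,
\]
where $T=K[x_0,x_1,\ldots,x_n]$ and $S=K[x_1,\ldots,x_n]$. The whole argument is therefore a depth computation for the Alexander dual $J^\vee$.

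I begin by computing $J^\vee$ from the combinatorics of $G_1$. The non-edges of $G_1$ are the non-edges of $G$ on $\{1,\ldots,n\}$ (which give $I$), the pair $\{1,2\}$ (an edge of $G$ that has been removed), and the pairs $\{0,i\}$ for $i\ge 3$; applying Alexander duality generator-by-generator gives
\[
J^\vee \;=\; I^\vee T \,\cap\, (x_1,x_2) \,\cap\, (x_0,\,x_3x_4\cdots x_n).
\]
Exactly as in the proof of Lemma~\ref{lemma for edge subdivision}, the hypothesis on $G$ forces $x_3\cdots x_n \in I^\vee$. Using this, a routine monomial manipulation yields the sum form
\[
J^\vee \;=\; x_0 L T \,+\, (x_1,x_2)(x_3\cdots x_n)\, T, \qquad L := I^\vee\cap(x_1,x_2),
\]
which is the decomposition I will feed into Lemma~\ref{Marcel}.

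The heart of the argument is Lemma~\ref{Marcel} applied with $I_1 := x_0 L T$ and $T' := (x_1,x_2)(x_3\cdots x_n)T$, so that $I_1 + T' = J^\vee$. Because $(x_1,x_2)(x_3\cdots x_n)\subset L$, one checks $I_1\cap T' = x_0 T'$, and routine short exact sequences (against the variable $x_0$ and against the nonzerodivisor $x_3\cdots x_n$) give $\depth T/T' = \depth T/(I_1\cap T') = n-1$. So Lemma~\ref{Marcel} applies with $r=n-1$ and yields
\[
H_\mathfrak{m}^i\!\left(T/J^\vee\right) \;\cong\; H_\mathfrak{m}^i\!\left(T/x_0 L T\right) \qquad \text{for every } i<n-2.
\]
Combined with $\depth T/x_0 LT = \depth S/L + 1$ and Lemma~\ref{lemma for edge subdivision}(ii) (which gives $\depth S/L \geq \depth S/I^\vee$), this delivers the lower bound $\depth T/J^\vee \geq \min(\depth S/I^\vee+1,\; n-2)$. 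For the matching upper bound I would exploit the short exact sequence
\[
0\longrightarrow T/LT \xrightarrow{\;x_0\;} T/J^\vee \longrightarrow S/(x_1,x_2)(x_3\cdots x_n) \longrightarrow 0,
\]
together with Lemma~\ref{lemma for edge subdivision}(i) to pass depth information back down to $S/I^\vee$.

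\textbf{The main obstacle} is the borderline case $\depth S/I^\vee = n-2$ (when $I$ has a linear resolution): Lemma~\ref{Marcel} alone delivers only $\depth T/J^\vee \geq n-2$, whereas $n-1$ is what we need. I expect to close this gap by chasing the connecting map $H_\mathfrak{m}^{n-2}(T/J^\vee) \hookrightarrow H_\mathfrak{m}^{n-1}(T/x_0 T')$ in the Mayer–Vietoris long exact sequence for $J^\vee = I_1 + T'$, and using both parts of Lemma~\ref{lemma for edge subdivision} simultaneously to force this connecting map to vanish.
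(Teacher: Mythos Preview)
Your sum decomposition $J^\vee = x_0LT + (x_1,x_2)(x_3\cdots x_n)T$ is correct, and the short exact sequence $0\to T/LT\xrightarrow{\,x_0\,} T/J^\vee\to T/(x_0,T')\to 0$ is valid. The difficulty is that every route through this decomposition ultimately compares $\depth T/J^\vee$ with $\depth S/L$, and you are then forced to prove $\depth S/L=\depth S/I^\vee$. Lemma~\ref{lemma for edge subdivision}(ii) gives only $\depth S/L\ge\depth S/I^\vee$, and the reverse inequality is \emph{false} in general. For $G=C_4$ one computes $I^\vee=(x_1x_2,x_1x_4,x_2x_3,x_3x_4)$ with $\depth S/I^\vee=1$, while $L=I^\vee\cap(x_1,x_2)=(x_1x_2,x_1x_4,x_2x_3)=(x_1,x_2)\cap(x_1,x_3)\cap(x_2,x_4)$ is Cohen--Macaulay with $\depth S/L=2$. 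Here $G_1=C_5$, so $\depth T/J^\vee=2$, strictly less than $\depth T/LT=3$; your short exact sequence therefore cannot pin down the correct depth. The appeal to Lemma~\ref{lemma for edge subdivision}(i) does not repair this: that lemma bounds $\depth S/(I^\vee+(x_1,x_2))$ from below, and there is no mechanism to convert it into an \emph{upper} bound on $\depth S/L$ or $\depth T/J^\vee$. So the gap is not only at the borderline $t=n-2$; the upper bound fails already in the generic case.

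The paper sidesteps this by working with the \emph{intersection} form $J^\vee=L\cap(x_0,x_3\cdots x_n)$ and computing the sum of the two pieces. Under the hypothesis on $G$ one obtains the key identity
\[
L+(x_0,x_3\cdots x_n)=(x_0,I^\vee),
\]
so that the Mayer--Vietoris long exact sequence for $J^\vee$ directly contains the term $T/(x_0,I^\vee)$, whose depth is exactly $\depth S/I^\vee$. The nonvanishing of $H^{t-1}_\mathfrak{m}\big(T/(x_0,I^\vee)\big)$ then injects into $H^{t}_\mathfrak{m}\big(T/J^\vee\big)$, giving the upper bound without ever needing $\depth S/L=\depth S/I^\vee$; Lemma~\ref{lemma for edge subdivision}(ii) is used only for the inequality $\depth T/LT\ge t$, which kills the preceding term in the sequence. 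This single Mayer--Vietoris computation also handles your borderline case uniformly.
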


\begin{proof}
By Remark~\ref{Passing to regularity}, it is enough to show that, $\depth {S}/{I^\vee} = \depth {S}/{J^\vee}$. Let $G_1= G \setminus \{1,2\}$ and $I_1=I(\bar{G}_1)$. Clearly, $I_1^\vee = (x_1, x_2) \cap I^\vee$ and
\begin{equation*}
\begin{split}
J^\vee & =  \left( \bigcap\limits_{i=3}^{n} (x_0, x_i) \right) \cap I_1^\vee \\
& = (x_0, x_3 \cdots x_n) \cap I_1^\vee.
\end{split}
\end{equation*}
Moreover, our assumption implies that for all $i>2$, there exists $F \in E(\bar{G})$ such that, $P_F \subset (x_1, x_2,  x_i)$. Therefore,
\begin{align} \label{Local EQ 1.3}
I_1^\vee + (x_0, x_3 \cdots x_n) & = (x_0, x_3 \cdots x_n , I_1^\vee) \nonumber \\
& = (x_0) + \left( x_3 \cdots x_n , \left[ (x_1, x_2) \cap \left( \bigcap\limits_{F \in \bar{G}} P_F \right) \right] \right) \nonumber \\
& = (x_0) + \left( (x_1, x_2, x_3) \cap \cdots \cap (x_1, x_2, x_n) \cap \left( \bigcap\limits_{F \in \bar{G}} P_F \right) \right) \nonumber \\
& = (x_0) + \left( \bigcap\limits_{F \in \bar{G}} P_F \right) = (x_0, I^\vee).
\end{align}
Now, consider Mayer-Vietoris long exact sequence
\begin{align}\label{Local exact sequence 1.1}
\cdots & \rightarrow H_\mathfrak{m}^{i-1} \left( \frac{S}{I_1^\vee} \right) \oplus  H_\mathfrak{m}^{i-1} \left( \frac{S}{(x_0, x_3 \cdots x_n)} \right) \rightarrow H_\mathfrak{m}^{i-1} \left( \frac{S}{(x_0, I^\vee)} \right) \rightarrow \nonumber \\
& \rightarrow H_\mathfrak{m}^{i} \left( \frac{S}{J^\vee} \right)  \rightarrow  H_\mathfrak{m}^{i} \left( \frac{S}{I_1^\vee} \right) \oplus  H_\mathfrak{m}^{i} \left( \frac{S}{(x_0, x_3 \cdots x_n)} \right) \rightarrow \cdots.
\end{align}
Let $t:= \depth \frac{S}{I^\vee} \leq \dim \frac{S}{I^\vee} = (n+1)-2$. Consider two cases:

\bigskip
\noindent\textbf{Case 1.} $t = (n+1)-2$.\\
In this case, using Lemma~\ref{lemma for edge subdivision}(ii), we have:
\begin{equation*}
(n+1) - 2 = \dim \frac{S}{I^\vee} = \depth \frac{S}{I^\vee} \leq \depth \frac{S}{I_1^\vee} \leq \dim \frac{S}{I_1^\vee} = (n+1) -2.
\end{equation*}
This means that, $\depth {S}/{I_1^\vee} = (n+1) -2$. Hence, by (\ref{Local exact sequence 1.1}), we have:
\begin{equation*}
H_\mathfrak{m}^{i-1} \left( \frac{S}{(x_0, I^\vee)} \right) \cong H_\mathfrak{m}^{i} \left( \frac{S}{J^\vee} \right), \qquad \text{for all } i<(n+1) -2
\end{equation*}
which implies that $\depth \frac{S}{J^\vee} = (n+1)-2 =t$.

\bigskip
\noindent\textbf{Case 2.} $t < (n+1)-2$.\\
Since $\depth S/I_1^\vee \geq t$, by Lemma~\ref{lemma for edge subdivision}(ii) and the exact sequence (\ref{Local exact sequence 1.1}), $H_\mathfrak{m}^{i} \left( \frac{S}{J^\vee} \right) = 0$, for all $i<t$ and we get the exact sequence
\begin{equation*}
0 \longrightarrow H_\mathfrak{m}^{t-1} \left( \frac{S}{(x_0, I^\vee)} \right) \longrightarrow H_\mathfrak{m}^{t} \left( \frac{S}{J^\vee} \right).
\end{equation*}
This implies that, $H_\mathfrak{m}^{t} \left( \frac{S}{J^\vee} \right) \neq 0$. Therefore, $\depth {S}/{J^\vee} = t$.
\end{proof}

Let $G$ be a graph without any cycle of length $3$ and $G_1$ a subdivision of $G$, that is, $G_1$ is obtained by adding some vertices on edges of $G$; then Theorem~\ref{Theorem edge subdivion} implies that $\reg \left( I(\bar{G}) \right) = \reg \left( I(\bar{G}_1) \right)$. As an application of the last reduction process, we state the following.

\begin{cor} \label{Regularity of cycles}
Let $C$ be a cycle of length $n>3$ and $I=I(\bar{C}) \subset S=K[x_1, \ldots, x_n]$ be the circuit ideal of ${C}$. Then,
\begin{itemize}
\item[\rm (i)] $\reg (I) =3$; in particular $I$ does not have linear resolution.
\item[\rm (ii)] If $G$ is not chordal graph, then the ideal $I(\bar{G})$ does not have linear resolution.
\end{itemize}
\end{cor}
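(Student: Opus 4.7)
The plan is to prove (i) by induction on $n$, using the invariance result of Theorem~\ref{Theorem edge subdivion} to slide from $C_n$ down to $C_4$; part (ii) will then be an immediate consequence of (i) together with Corollary~\ref{Induced subgraph}.

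For the base case $n=4$, I would note that $\bar{C_4}$ has only two edges, namely the two diagonals, so $I(\bar{C_4})=(x_1x_3,x_2x_4)$ is a complete intersection of two quadrics. Its Koszul resolution $0 \to S(-4) \to S(-2)^2 \to I(\bar{C_4}) \to 0$ then shows directly that $\reg I(\bar{C_4}) = 3$.

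For the inductive step with $n>4$, I would fix any edge $\{a,b\}$ of $C_{n-1}$. Since $C_{n-1}$ is triangle-free (as $n-1\geq 4$), the remaining neighbours of $a$ and $b$ in $C_{n-1}$ are distinct, so no third vertex is simultaneously adjacent to both $a$ and $b$; hence the hypothesis of Theorem~\ref{Theorem edge subdivion} is satisfied with $\{a,b\}$ playing the role of $\{1,2\}$. Replacing the edge $\{a,b\}$ by a path of length two through a fresh vertex produces exactly $C_n$, so Theorem~\ref{Theorem edge subdivion} yields $\reg I(\bar{C_n}) = \reg I(\bar{C_{n-1}})$. Induction then forces $\reg I(\bar{C_n}) = 3$, and since the generators of $I(\bar{C_n})$ all have degree $2$, regularity $3$ rules out a linear resolution.

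For (ii), the standard fact that any non-chordal graph contains an induced cycle of length at least $4$ furnishes an induced subgraph $C \subseteq G$ whose ideal $I(\bar{C})$ fails to have a linear resolution by (i). Corollary~\ref{Induced subgraph} then transports this failure to $I(\bar{G})$. The only point requiring real attention is the direct computation of $\reg I(\bar{C_4})$; once that is in hand the rest is bookkeeping about which edge of the cycle is being subdivided at each step.
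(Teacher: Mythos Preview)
Your proof is correct and follows essentially the same approach as the paper: induction on $n$ with the base case $n=4$ handled by the explicit Koszul resolution of the complete intersection $(x_1x_3,x_2x_4)$, the inductive step carried out via Theorem~\ref{Theorem edge subdivion} (you subdivide an edge of $C_{n-1}$ to obtain $C_n$, while the paper contracts an edge of $C_n$ to obtain $C_{n-1}$, but this is the same invocation of the theorem), and part~(ii) deduced from (i) together with Corollary~\ref{Induced subgraph}. Your explicit check that $C_{n-1}$ is triangle-free for $n-1\geq 4$, so that the hypothesis of Theorem~\ref{Theorem edge subdivion} is met, is exactly what is needed.
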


\begin{proof}
(i) Let $E(C) = \big\{ \{1, 2\}, \{2, 3\}, \ldots, \{n-1, n\}, \{n,1\} \big\}$. We use induction on $n$. For $n=4$ an easy computation shows that, the minimal free resolution of $I(\bar{C})$ is:
$$
0\to S(-4) \to S^2(-2) \to I,
$$
which is not linear. Assume that $n>4$ and the theorem holds for cycles of length $n-1$. For a cycle $C$ of length $n$, let $C'$ be the graph $\left( C \setminus 1 \right) \cup \{1, 3\}$. Then $C'$ is a cycle of length $n-1$ and by induction hypothesis, $\reg I(\bar{C'}) = 3$. Using Theorem~\ref{Theorem edge subdivion}, we have $\reg I(\bar{C}) = \reg I(\bar{C'}) = 3$.

(ii) If $G$ is not chordal, then $G$ contains an induced cycle $C_n$ with $n >3$. Now, from (i) and Corollary~\ref{Induced subgraph} we conclude that the ideal $I(\bar{G})$ does not have linear resolution.
\end{proof}

Now, we state another reduction which is removing a simplicial vertex in a graph.

\begin{thm}\label{Removing simplicial vertex}
Let $G$ be a graph on $[n]$ and $v$ be a simplicial vertex of $G$. Let $G_1 = G \setminus v$ and $I= I(\bar{G}), J=I(\bar{G}_1)$ be the corresponding non-zero circuit ideals in $S = K[x_1, \ldots, x_n]$. Then,
$$\reg (I) = \reg (J).$$
\end{thm}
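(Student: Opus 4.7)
The plan is to compare the depths of $S/I^\vee$ and $S/J^\vee$; by Remark~\ref{Passing to regularity}, the equality $\reg(I) = \reg(J)$ is equivalent to $\depth S/I^\vee = \depth S/J^\vee$. Take $v = n$ without loss of generality, set $A = N_G(v)$ and $W = [n] \setminus (A \cup \{v\})$. The edges of $\bar{G}$ incident to $v$ are exactly $\{v, w\}$ for $w \in W$, so if $W = \emptyset$ then $I = J$ and there is nothing to prove; hence assume $W \neq \emptyset$. A standard computation of the Alexander dual yields
$$I^\vee \;=\; J^\vee \cap \bigcap_{w \in W}(x_v, x_w) \;=\; J^\vee \cap (x_v, \mathbf{x}_W),$$
where $\mathbf{x}_W = \prod_{w \in W}x_w$.

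The key point where the simplicial hypothesis enters is the claim that $\mathbf{x}_W \in J^\vee$. Since $v$ is simplicial, $A$ is a clique in $G$, so no pair of vertices in $A$ can form an edge of $\bar{G}_1$; hence every edge $\{i, j\} \in E(\bar{G}_1)$ has at least one endpoint in $W$, which gives $\mathbf{x}_W \in (x_i, x_j)$. Since $J^\vee = \bigcap_{\{i,j\} \in E(\bar{G}_1)}(x_i, x_j)$, we conclude $\mathbf{x}_W \in J^\vee$, and therefore $J^\vee + (x_v, \mathbf{x}_W) = J^\vee + (x_v)$.

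Next I would apply the Mayer--Vietoris long exact sequence to the pair $(J^\vee, (x_v, \mathbf{x}_W))$:
$$\cdots \to H_\mathfrak{m}^{i-1}(S/(J^\vee + (x_v))) \to H_\mathfrak{m}^i(S/I^\vee) \to H_\mathfrak{m}^i(S/J^\vee) \oplus H_\mathfrak{m}^i(S/(x_v, \mathbf{x}_W)) \to H_\mathfrak{m}^i(S/(J^\vee + (x_v))) \to \cdots.$$
Set $s = \depth S/J^\vee \le \dim S/J^\vee = n - 2$. Two depth computations drive the argument: first, $\depth S/(J^\vee + (x_v)) = s - 1$ since $x_v$ is a nonzerodivisor on $S/J^\vee$ (no generator of $J^\vee$ involves $x_v$); second, $S/(x_v, \mathbf{x}_W) \cong K[x_i : i \neq v]/(\mathbf{x}_W)$ is Cohen--Macaulay of depth $n - 2$, since $\mathbf{x}_W$ is a nonzerodivisor in the polynomial ring $K[x_i : i \neq v]$. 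For $i < s$ each of the three external terms vanishes, so $H_\mathfrak{m}^i(S/I^\vee) = 0$; for $i = s$, vanishing of $H_\mathfrak{m}^{s-1}(S/J^\vee)$ and of $H_\mathfrak{m}^{s-1}(S/(x_v, \mathbf{x}_W))$ forces an injection $H_\mathfrak{m}^{s-1}(S/(J^\vee + (x_v))) \hookrightarrow H_\mathfrak{m}^s(S/I^\vee)$ whose source is nonzero, hence $H_\mathfrak{m}^s(S/I^\vee) \neq 0$. This yields $\depth S/I^\vee = s$, as required.

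The only real obstacle is extracting the identity $\mathbf{x}_W \in J^\vee$, which is precisely where simpliciality of $v$ enters. Once that is in place the Mayer--Vietoris manipulation is a minor variant of those already performed in Lemma~\ref{lemma for edge subdivision} and Theorem~\ref{Theorem Regularity of Union of graphs}.
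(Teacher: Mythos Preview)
Your proof is correct and follows essentially the same path as the paper: both compute $I^\vee = J^\vee \cap (x_v,\mathbf{x}_W)$, use simpliciality of $v$ to obtain $\mathbf{x}_W \in J^\vee$, and then compare depths via local cohomology. The only difference is packaging---the paper rewrites $I^\vee$ as the sum $\bigl((x_v)\cap J^\vee\bigr)+(\mathbf{x}_W)$ and invokes Lemma~\ref{Marcel} together with Lemma~\ref{Depth S/x_nI = Depth S/I}, whereas you keep the intersection and run Mayer--Vietoris directly, using that $x_v$ is a nonzerodivisor on $S/J^\vee$; the underlying computation is the same.
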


\begin{proof}
By Remark~\ref{Passing to regularity}, it is enough to show that, $\depth {S}/{I^\vee} = \depth {S}/{J^\vee}$. Without loss of generality, we may assume that, $N(v)= \{1, \ldots, v-1\}$ and $J \subset K[x_1, \ldots, \hat{x}_v, \ldots, x_n]$. Therefore, we have:
\begin{equation*}
I=J+(x_vx_i \colon \quad v<i\leq n).
\end{equation*}
Moreover, since $v$ is a simplicial vertex, we conclude that, $x_{v+1} \cdots x_n \in J^\vee$. Hence we have:
\begin{equation*}
\begin{split}
I^\vee 	& = J^\vee \cap \left( \bigcap\limits_{i=v+1}^{n}(x_v, x_i)  \right)\\
		& = J^\vee \cap (x_v, x_{v+1} \cdots x_n) \\
		& = \left( (x_v) \cap J^\vee \right) + (x_{v+1} \cdots x_n).
\end{split}
\end{equation*}
Clearly, $\left( (x_v) \cap J^\vee \right) \cap (x_{v+1} \cdots x_n) = (x_{v} \cdots x_n)$. Hence by Lemma~\ref{Marcel},
\begin{equation*}
H_\mathfrak{m}^i \left( \frac{S}{I^\vee} \right) \cong H_\mathfrak{m}^i \left( \frac{S}{(x_v) \cap J^\vee} \right), \qquad \text{for all } i<n-2.
\end{equation*}
Since $\dim S/I^\vee = n-2$, the above isomorphism and Lemma~\ref{Depth S/x_nI = Depth S/I} implies that, $\depth S/I^\vee = \depth S/J^\vee$.
\end{proof}

\noindent{\textbf{Remark.}} Let $G$ be a non-complete graph, $v$ be a simplicial vertex of $G$ and $G_1 = G \setminus v$. If $G_1$ is a complete graph, then the ideal $I = I(\bar{G}) = (x_vx_i \colon \quad \{v,i\} \in E(\bar{G}))$ is a non-zero ideal and
$$I^\vee = (x_v, \prod\limits_{\{v, i\} \in E(\bar{G})} x_i).$$
In particular, $I^\vee$ is Cohen-Macaulay and the ideal $I$ has a $2$-linear resolution (Theorem~\ref{Eagon-Reiner}).

If $G_1$ is not a complete graph, then Theorem~\ref{Removing simplicial vertex} implies that $\reg I(\bar{G}) =  \reg I(\bar{G}_1)$.

\bigskip
The following nice characterization of chordal graphs and Theorem~\ref{Removing simplicial vertex}, enable us to prove that the ideal $I(\bar{G})$ has a linear resolution, whenever $G$ is a chordal graph.
\begin{thm}[{\cite{Boland}, essentially \cite{Dirac}}] \label{Chordal Graphs have simplicial vertex}
A graph $G$ is chordal if and only if every induced subgraph of $G$ has a simplicial vertex.
\end{thm}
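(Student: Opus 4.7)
The plan is to prove the two implications separately, focusing most effort on the chordal-implies-simplicial-existence direction.

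For the easy direction, I would observe that if every induced subgraph of $G$ admits a simplicial vertex, then $G$ cannot contain any induced cycle $C_n$ with $n \geq 4$: in such a cycle every vertex has exactly two neighbors and they are non-adjacent, so no vertex is simplicial, contradicting the hypothesis applied to the induced $C_n$. Hence $G$ is chordal.

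For the hard direction, since induced subgraphs of chordal graphs are trivially chordal, it suffices to produce one simplicial vertex in an arbitrary chordal graph. I would actually prove the stronger statement, by induction on $|V(G)|$, that every chordal $G$ is either complete or contains two non-adjacent simplicial vertices. In the inductive step, assuming $G$ is not complete, I would fix non-adjacent $a, b \in V(G)$ and a minimal $a$-$b$ separator $S$, and let $A, B$ be the vertex sets of the components of $G \setminus S$ containing $a$ and $b$ respectively. Applying induction to $G[A \cup S]$ (chordal as an induced subgraph), either it is complete---in which case any vertex of $A$ is simplicial in $G$, since its $G$-neighborhood lies inside $A \cup S$---or it has two non-adjacent simplicial vertices, and since $S$ induces a clique (see next paragraph), at least one of them must lie in $A$, and is again simplicial in $G$. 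The same argument on the $B$-side produces a second simplicial vertex of $G$, non-adjacent to the first because $S$ separates $A$ from $B$.

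The main obstacle, and the heart of the argument, is the lemma that a minimal vertex separator $S$ in a chordal graph induces a clique. I would prove this by contradiction: if $x, y \in S$ are non-adjacent, minimality of $S$ forces each of $x$ and $y$ to have a neighbor in $A$ and in $B$, hence there exist induced paths from $x$ to $y$ through $A$ and through $B$, each of length at least $2$. Concatenating them yields an induced cycle of length at least $4$ in $G$, contradicting chordality. Once this clique lemma is established, the inductive scheme above goes through without further difficulty.
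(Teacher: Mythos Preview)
The paper does not supply its own proof of this theorem: it is stated as a classical result with citations to Lekkerkerker--Boland and Dirac, and then used as a black box to derive Corollary~\ref{Chordal graphs have linear resolution 1}. So there is nothing in the paper to compare your argument against.

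That said, your proposal is correct and is precisely the standard Dirac argument. The easy direction is fine as written. For the hard direction, the induction on $|V(G)|$ via a minimal $a$--$b$ separator $S$, together with the key lemma that $S$ is a clique, is exactly Dirac's original approach. Two small points worth making explicit when you write it out: first, in the clique lemma, minimality of $S$ gives for each $x\in S$ an $a$--$b$ path in $G-(S\setminus\{x\})$, and since this path cannot revisit $x$, the vertex immediately before $x$ lies in $A$ and the one immediately after lies in $B$, so $x$ has neighbors in both components; second, when concatenating the two shortest $x$--$y$ paths (one through $A$, one through $B$), the resulting cycle is induced because there are no $A$--$B$ edges, each shortest path is chordless, and $x,y$ are non-adjacent by assumption. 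With these details filled in, the argument is complete.
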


\begin{cor} \label{Chordal graphs have linear resolution 1}
If $G$ is a non-complete chordal graph, then the ideal $I = I(\bar{G})$ has a $2$-linear resolution over any filed $K$.
\end{cor}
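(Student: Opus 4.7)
The plan is to proceed by induction on the number of vertices $n = |V(G)|$. Since $G$ is non-complete, $\bar G$ has at least one edge, so $I = I(\bar G)$ is a nonzero monomial ideal generated in degree $2$; consequently, showing $\reg(I) = 2$ is equivalent to showing $I$ has a $2$-linear resolution. For the base case $n=2$, a non-complete graph must consist of two isolated vertices, so $I = (x_1x_2)$ is principal and trivially has a $2$-linear resolution.

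For the inductive step, assume $n \geq 3$ and that the statement holds for every non-complete chordal graph on fewer than $n$ vertices. By Theorem~\ref{Chordal Graphs have simplicial vertex}, $G$ admits a simplicial vertex $v$. Set $G_1 = G \setminus v$; since $G_1$ is an induced subgraph of the chordal graph $G$, it is again chordal. I now split the argument according to whether $G_1$ is complete.

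If $G_1$ is complete, the Remark immediately preceding the corollary applies: the ideal $I^\vee$ is then of the form $(x_v,\, \prod_{\{v,i\}\in E(\bar G)} x_i)$, which is a complete intersection, hence Cohen-Macaulay of dimension $n-2$. By the Eagon-Reiner theorem (Theorem~\ref{Eagon-Reiner}), $I$ has a $2$-linear resolution. If instead $G_1$ is non-complete, then $J = I(\bar{G_1})$ is nonzero, and both $I$ and $J$ are nonzero circuit ideals; Theorem~\ref{Removing simplicial vertex} gives $\reg(I) = \reg(J)$. By the induction hypothesis, $J$ has a $2$-linear resolution, so $\reg(J) = 2$, and therefore $\reg(I) = 2$, which combined with the fact that $I$ is generated in degree $2$ yields the required $2$-linear resolution.

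The only subtlety, and the point where an apparently routine induction could break down, is the case in which deleting the simplicial vertex collapses $G$ to a complete graph $G_1$: then $J = 0$ and Theorem~\ref{Removing simplicial vertex} is not directly applicable, because its statement requires both circuit ideals to be nonzero. This is precisely why the Remark before the corollary was recorded, and invoking it handles this boundary case cleanly via Eagon-Reiner. All other steps (chordality passing to induced subgraphs, existence of a simplicial vertex, and the degree-$2$ generation of $I$) are standard and require no further work. Note also that the field $K$ plays no role in the argument, since Theorems~\ref{Eagon-Reiner} and~\ref{Removing simplicial vertex} are characteristic-free, giving the claimed conclusion over an arbitrary field.
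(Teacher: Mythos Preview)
Your proof is correct and follows essentially the same approach as the paper: induction on the number of vertices, using Theorem~\ref{Chordal Graphs have simplicial vertex} to find a simplicial vertex, Theorem~\ref{Removing simplicial vertex} for the inductive step, and the Remark preceding the corollary (via Eagon--Reiner) to handle the boundary case where $G_1$ becomes complete. Your write-up is simply more explicit about the base case and the case split than the paper's terse version.
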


\begin{proof}
Let $G$ be a non-complete chordal graph. By Theorem~\ref{Chordal Graphs have simplicial vertex}, $G$ has simplicial vertex $v$. If $G_1 = G \setminus v$, then $G_1$ is again chordal graph. Now, the induction and Theorem~\ref{Removing simplicial vertex} together with the remark after Theorem~\ref{Removing simplicial vertex}, yield the conclusion.
\end{proof}

By Corollaries \ref{Regularity of cycles}(ii) and \ref{Chordal graphs have linear resolution 1} we have the following result which was first proved by Fr\"oberg in \cite{Fr}.

\begin{cor} \label{Diff Froberg}
A graph $G$ is chordal if and only if $I(\bar{G})$ has a linear resolution.
\end{cor}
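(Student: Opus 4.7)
The plan is to assemble the two directions directly from the two preceding corollaries, which together already deliver Fr\"oberg's characterization. For the forward implication, I would split on whether $G$ is complete. If $G$ is complete, then $\bar{G}$ has no edges, so $I(\bar{G})=0$ and the statement is vacuous (or holds by convention). If $G$ is chordal but not complete, then Corollary~\ref{Chordal graphs have linear resolution 1} immediately gives that $I(\bar{G})$ has a $2$-linear resolution over every field $K$.

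For the converse, I would argue by contrapositive: assume $G$ is not chordal. Then $G$ contains an induced cycle $C_n$ of length $n>3$, and Corollary~\ref{Regularity of cycles}(ii) says exactly that $I(\bar{G})$ does not have a linear resolution. Combining the two directions yields the equivalence. There is no real obstacle here: the substance of the argument was carried out earlier, in Theorem~\ref{Removing simplicial vertex} (plus Theorem~\ref{Chordal Graphs have simplicial vertex}) for the chordal-implies-linear direction, and in Theorem~\ref{Theorem edge subdivion} for the non-chordal-implies-non-linear direction. The final corollary is essentially bookkeeping, so the only care needed is to handle the trivial complete-graph case explicitly and to invoke Corollary~\ref{Induced subgraph} implicitly through Corollary~\ref{Regularity of cycles}(ii).
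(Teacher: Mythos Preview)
Your proposal is correct and matches the paper's own argument exactly: the paper simply states that the corollary follows from Corollaries~\ref{Regularity of cycles}(ii) and~\ref{Chordal graphs have linear resolution 1}, which is precisely the two-direction assembly you describe. Your extra care with the complete-graph case is a minor clarification the paper leaves implicit.
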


\noindent The class of chordal graphs are contained in the class of decomposable graphs (c.f. \cite[Lemma 9.2.1]{HHBook}). Using our reduction processes, we can find the regularity of decomposable graphs in terms of its indecomposable components.

\begin{defn}[Decomposable Graph] \rm
Let $G$ be a graph on vertex set $[n]$. We say that $G$ is \textit{decomposable}, if there exists proper subsets $P$ and $Q$ of $[n]$ with $P \cup Q = [n]$ such that,
\begin{itemize}
\item[\rm (a)] $\{i,j\} \in E(G)$, for all $i,j \in P\cap Q$, $i \neq j$.
\item[\rm (b)] $\{i, j\} \notin E(G)$, for all $i \in P \setminus Q$ and $j \in Q \setminus P$.
\end{itemize}
\end{defn}

\begin{rem}[Regularity of Decomposable Graphs] \rm
Let $G$ be a decomposable graph and $P,Q$ be proper subsets of $V(G)=[n]$ which satisfies in the mentioned conditions.
\begin{itemize}
\item If both of $G_P$ and $G_Q$ are complete graphs, then:
$$I(\bar{G}) = (x_iy_j \colon \quad i \in P \setminus Q, \quad j \in Q \setminus P).$$
Hence,
$$I(\bar{G})^\vee = \left( \prod\limits_{i \in P \setminus Q}x_i, \prod\limits_{i \in Q \setminus P}y_i \right)$$
which is Cohen-Macaulay of dimension $n-2$. Thus, $\reg I(\bar{G}) =2$, by Theorem~\ref{Eagon-Reiner}.

\item If $G_P$ is complete graph but $G_Q$ is not complete graph, then all $v \in P \setminus Q$ are simplicial vertex. Hence by Theorem~\ref{Removing simplicial vertex}, $\reg I(\bar{G}) = \reg I \left( \overline{G \setminus v} \right)$.

If $|P|=1$, we conclude that $\reg I(\bar{G}) = \reg I(\bar{G}_Q)$. Otherwise, the graph $G'=G \setminus v$ is again decomposable with the components $P'=P \setminus v$ and $Q$. Note that, $G'_{P'}$ is again a complete graph. Going on this argument, we conclude that, $\reg I(\bar{G}) = \reg I(\bar{G}_Q)$.

\item If non of $G_P$ and $G_Q$ are complete graphs, then Theorem~\ref{Theorem Regularity of Union of graphs} implies that,
$\reg I(\bar{G}) = \max \{ \reg I(\bar{G}_P), \; \reg I(\bar{G}_Q) \}$.
\end{itemize}
\end{rem}

\begin{rem} \rm
Let $G$ be a (indecomposable) graph. After our reduction processes (Theorems \ref{Theorem edge subdivion} and \ref{Removing simplicial vertex}), finally we get a graph $G'$ with $\reg I(\bar G) = \reg I(\bar{G}')$ and $G'$ has neither a simplicial vertex nor a subdivision. If at least one of the connected components of $G'$ has cycle of length greater that $3$, then $I(\bar G)$ does not have a $2$-linear resolution (Corollary~\ref{Regularity of cycles}(ii)).

But, sometimes we are not able to do more reduction on a graph. For example, if $G$ is the Peterson graph or the following Hamiltonian graph, then we cannot apply our reduction process to further simplify $G$.
\begin{center}
\psset{xunit=0.7cm,yunit=0.7cm,algebraic=true,dotstyle=o,dotsize=3pt 0,linewidth=0.8pt,arrowsize=3pt 2,arrowinset=0.25}
\begin{pspicture*}(-0.1,-1.9)(12.9,4.5)
\pspolygon[linestyle=none,fillstyle=solid](10.02,3.72)(7.66,2)(8.57,-0.78)(11.49,-0.77)(12.39,2.01)
\pspolygon[linestyle=none,fillstyle=solid](10.02,2.46)(8.96,1.68)(9.37,0.43)(10.69,0.44)(11.09,1.69)
\psline(2.34,3.72)(0,2)
\psline(0,2)(0.91,-0.76)
\psline(0.91,-0.76)(3.82,-0.74)
\psline(3.82,-0.74)(4.7,2.03)
\psline(4.7,2.03)(2.34,3.72)
\psline(2.34,3.72)(2.34,2.56)
\psline(3.4,1.76)(4.7,2.03)
\psline(3.02,0.34)(3.82,-0.74)
\psline(1.62,0.34)(0.91,-0.76)
\psline(1.2,1.76)(0,2)
\psline(2.34,2.56)(1.62,0.34)
\psline(2.34,2.56)(3.02,0.34)
\psline(1.62,0.34)(3.4,1.76)
\psline(3.4,1.76)(1.2,1.76)
\psline(1.2,1.76)(3.02,0.34)
\psline(10.02,3.72)(7.66,2)
\psline(7.66,2)(8.57,-0.78)
\psline(8.57,-0.78)(11.49,-0.77)
\psline(11.49,-0.77)(12.39,2.01)
\psline(12.39,2.01)(10.02,3.72)
\psline(10.02,2.46)(8.96,1.68)
\psline(8.96,1.68)(9.37,0.43)
\psline(9.37,0.43)(10.69,0.44)
\psline(10.69,0.44)(11.09,1.69)
\psline(11.09,1.69)(10.02,2.46)
\psline(10.02,3.72)(10.02,2.46)
\psline(7.66,2)(8.96,1.68)
\psline(9.37,0.43)(8.57,-0.78)
\psline(10.69,0.44)(11.49,-0.77)
\psline(11.09,1.69)(12.39,2.01)
\rput[tl](0.2,-1.3){Peterson Graph}
\rput[tl](7.6,-1.3){Hamiltonian Graph}
\psdots(2.34,3.72)
\psdots(0,2)
\psdots(0.91,-0.76)
\psdots(3.82,-0.74)
\psdots(4.7,2.03)
\psdots(2.34,2.56)
\psdots(1.2,1.76)
\psdots(3.4,1.76)
\psdots(3.02,0.34)
\psdots(1.62,0.34)
\psdots(10.02,3.72)
\psdots(7.66,2)
\psdots(8.57,-0.78)
\psdots(11.49,-0.77)
\psdots(12.39,2.01)
\psdots(10.02,2.46)
\psdots(8.96,1.68)
\psdots(9.37,0.43)
\psdots(10.69,0.44)
\psdots(11.09,1.69)
\end{pspicture*}
\end{center}
\end{rem}

\email{morales@ujf-grenoble.fr}
\email{yazdan@iasbs.ac.ir}
\email{rashidzn@iasbs.ac.ir}

\end{document}